\documentclass[11pt]{amsart}
\marginparwidth  0pt  \marginparsep 0pt \oddsidemargin  -0.1in
\evensidemargin  0pt \topmargin  -.3in \textwidth  6.5in \textheight
9in
\usepackage{amsmath,amsthm,amsfonts,amssymb,latexsym,epsfig}

\newtheorem{theorem}{Theorem}[section]

\newtheorem{lemma}{Lemma}[section]

\numberwithin{equation}{section}

\theoremstyle{definition}

\theoremstyle{remark}

\begin{document}
\title{The $l^p$ norms of a class of weighted mean matrices}
\author{Peng Gao and Huayu Zhao}

\subjclass[2000]{Primary 47A30} \keywords{Hardy's inequality, $l^p$
operator norms, weighted mean matrices}


\begin{abstract}
  We study the $l^p$ norms of a class of weighted mean matrices whose diagonal terms are given by $n^{\alpha}/\sum^{n}_{i=1}i^{\alpha}$ with $\alpha >
  -1$. The $l^p$ norms of such matrices are known for $p \geq 2, (\alpha+1)p >1$ and $1<p \leq 4/3, 1/p \leq \alpha \leq 1$.  In this paper, we determine
  the $l^p$ norms of such matrices for $p \geq 1.35, 0\leq \alpha \leq 1$.
\end{abstract}

\maketitle
\section{Introduction}
\label{sec 1} \setcounter{equation}{0}

  For $p>1$, we denote $l^p$ for the Banach space of all complex sequences ${\bf a}=(a_n)_{n \geq 1}$ with norm
\begin{equation*}
   ||{\bf a}||_p: =(\sum_{n=1}^{\infty}|a_n|^p)^{1/p} < \infty.
\end{equation*}

  Let $C=(c_{n,k})$ be an infinite matrix. The $l^{p}$ operator norm of $C$ is defined as
\begin{equation*}
\label{02}
    ||C||_{p,p}=\sup_{||{\bf a}||_p = 1}\Big | \Big |C \cdot {\bf a}\Big | \Big |_p.
\end{equation*}

   A matrix $A=(a_{n,k})$ is said to be a lower triangular matrix if $a_{n,k}=0$ for $n<k$ and a lower triangular matrix $A$ is said to be a summability
   matrix if $a_{n,k} \geq 0$ and $\sum^n_{k=1}a_{n,k}=1$. We further say that a summability matrix $A$ is a weighted mean matrix if its entries satisfy:
\begin{equation}
\label{021}
    a_{n,k}=\frac {\lambda_k}{\Lambda_n},  ~~ 1 \leq k \leq
    n; \hspace{0.1in} \Lambda_n=\sum^n_{i=1}\lambda_i, \lambda_i \geq 0, \lambda_1>0.
\end{equation}
  We shall also say that a weighted mean matrix $A$ is generated by $\{\lambda_n \}^{\infty}_{n=1}$ when $A$ is an infinite weighted mean matrix whose
  entries are given by \eqref{021}.

  The study on $l^{p}$ norms of weighted mean matrices originated from the celebrated Hardy's inequality (\cite[Theorem 326]{HLP}), which asserts that for
  $p>1$,
\begin{equation*}
\sum^{\infty}_{n=1}\Big{|}\frac {1}{n}
\sum^n_{k=1}a_k\Big{|}^p \leq \Big (\frac
{p}{p-1} \Big )^p\sum^\infty_{n=1}|a_n|^p,
\end{equation*}

  In terms of $l^p$ norms, Hardy's inequality can be regarded as asserting that the Ces\'aro matrix operator $C$, given by $c_{n,k}=1/n , k\leq n$ and $0$
otherwise, has norm $p/(p-1)$.

  In a series of work (see for example \cite{B1}-\cite{Be1}), G. Bennett studied systematically $l^p$ norms of general weighted mean matrices. A particular
  class of matrices considered by Bennett is the weighted mean matrices generated by $\{ n^{\alpha} \}^{\infty}_{n=1}$. More concretely, in \cite[p.
  40-41]{B4} (see also \cite[p. 407]{B5}), Bennett claimed the following inequality to hold for any ${\bf a} \in l^p$ whenever $p>1, (\alpha+1) p >1$:
\begin{align}
\label{8}
   \sum^{\infty}_{n=1}\Big{|}\frac
1{\sum^n_{i=1}i^{\alpha}}\sum^n_{i=1}i^{\alpha}a_i\Big{|}^p
\leq \Big(\frac {(\alpha+1) p}{(\alpha+1) p-1} \Big
)^p\sum^{\infty}_{n=1}|a_n|^p.
\end{align}
     We note here the constant $((\alpha+1) p /((\alpha+1) p-1))^p$ is best possible (see \cite{Be1}).

No proof of inequality \eqref{8} is given in \cite{B4}-\cite{B5}. Bennett \cite{Be1} and the first-named author \cite{G}
     proved inequality \eqref{8} for $p>1, \alpha \geq 1$ or $\alpha \leq 0, (\alpha+1) p >1$
     independently. The proofs of \eqref{8} given in \cite{Be1} and \cite{G} are the same, they both use the following well-known result of J. M. Cartlidge
     \cite{Car} (see also \cite[p. 416, Theorem C]{B1}), which says that for any weighted mean matrix $A$ given by \eqref{021} and any fixed $p>1$, if
\begin{equation*}
    L=\sup_n\Big(\frac {\Lambda_{n+1}}{\lambda_{n+1}}-\frac
    {\Lambda_n}{\lambda_n}\Big) < p ~~,
\end{equation*}
    then $||A||_{p,p} \leq p/(p-L)$.

    An improvement of the above result of Cartlidge is obtained by the first-named author in \cite{G5}, who showed that
if for any integer $n \geq 1$, there exists a positive constant
    $0<L<p$ such that
\begin{equation}
\label{024}
    \frac {\Lambda_{n+1}}{\lambda_{n+1}} \leq \frac
    {\Lambda_n}{\lambda_n}  \Big (1- \frac
    {L\lambda_n}{p\Lambda_n} \Big )^{1-p}+\frac {L}{p}~~,
\end{equation}
    then
    $||A||_{p,p} \leq p/(p-L)$.

   As an application, the first-named author proved in \cite{G5} that inequality \eqref{8} holds for $p \geq 2, 0 \leq
   \alpha \leq 1$. Using an similar approach earlier, the case  or $1 < p \leq 4/3, 1/p \leq \alpha \leq 1$ is also proved in \cite{G6}.

   In view of the above results, we see that the only open cases for inequality \eqref{8} are $1<p \leq 4/3$, $0<\alpha< 1/p$ and
$4/3< p <2$, $0<\alpha< 1$. It is our goal in this paper to refine the arguments in \cite{G5} to study inequality \eqref{8} for these remaining cases. Our
main
result is
\begin{theorem}
\label{mainthm}
     Inequality \eqref{8} holds for $p \geq 1.35$ and $0 \leq \alpha \leq 1$ or $p >1$ and $1/p \leq
   \alpha \leq 1$.
\end{theorem}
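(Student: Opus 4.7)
The plan is to apply the refined criterion \eqref{024} of the first-named author with $L = 1/(\alpha+1)$. With this choice, $p/(p-L) = (\alpha+1)p/((\alpha+1)p-1)$, exactly the constant appearing in \eqref{8}, and the constraint $0 < L < p$ is equivalent to $(\alpha+1)p > 1$, automatic throughout the claimed range. Thus it suffices to verify \eqref{024} for the weighted mean matrix generated by $\lambda_n = n^\alpha$, $\Lambda_n = \sum_{i=1}^n i^\alpha$.

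Set $r_n = \Lambda_n/\lambda_n$ and $y_n = 1/((\alpha+1)p\,r_n)$. Since $\Lambda_{n+1}/\lambda_{n+1} = (r_n + 1)(n/(n+1))^\alpha$, the inequality \eqref{024} becomes
\[
(r_n + 1)\left(\frac{n}{n+1}\right)^{\alpha} \leq r_n(1 - y_n)^{1-p} + \frac{1}{(\alpha+1)p}.
\]
First I would obtain sharp two-sided estimates for $r_n$ via Euler--Maclaurin, namely $r_n = n/(\alpha+1) + 1/2 + \eta_n$ with $\eta_n = O(1/n)$ of controlled sign, exploiting the concavity of $x \mapsto x^\alpha$ on $[1,\infty)$ for $0 \leq \alpha \leq 1$. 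These estimates show that $r_{n+1}-r_n$ exceeds $1/(\alpha+1)$ by an amount of order $1/n$, which is the precise obstruction that prevents Cartlidge's unrefined criterion from applying.

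Next I would expand $(1-y_n)^{1-p}$ using the Taylor series $1 + (p-1)y + \tfrac{(p-1)p}{2}y^2 + \cdots$, all of whose coefficients are positive for $p>1$. The first-order term on the right produces $r_n + 1/(\alpha+1) + o(1)$, matching the leading Taylor expansion of the left side up to the same order; after cancellation, the desired inequality reduces to one in which the second-order gain $\tfrac{p-1}{2(\alpha+1)^2 p\, r_n}$ is pitted against the $O(1/n)$ deficit coming from $r_{n+1} - r_n - 1/(\alpha+1)$. The numerical threshold $p \geq 1.35$ enters precisely as the value that makes this second-order gain dominate the deficit uniformly in $\alpha \in [0,1]$; the second clause $1/p \leq \alpha \leq 1$ follows by the same mechanism because the deficit vanishes exactly at $\alpha = 1$, leaving room for smaller $p$.

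The main obstacle is establishing the reduced inequality \emph{uniformly in $\alpha$ and for all $n \geq 1$}. Near the lower endpoints $\alpha = 0$ and $p = 1.35$ the bound is essentially tight, so the Taylor expansions must be carried out with explicit, sign-definite remainders rather than $O$-terms, and the passage from $r_n$ to $r_{n+1}$ must be controlled through sharp inequalities of the shape $\Lambda_{n+1}/\lambda_{n+1} - \Lambda_n/\lambda_n \leq 1/(\alpha+1) + c_\alpha/n^{1+\alpha}$ with an explicit constant $c_\alpha$. A final small-$n$ verification by direct computation will likely be needed to cover the initial cases where the asymptotic analysis is not yet in force; the cutoff $p = 1.35$ is presumably the smallest value at which the asymptotic estimate and the initial-segment check can simultaneously be made to close.
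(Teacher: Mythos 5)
Your starting point is the same as the paper's: verify the refined Cartlidge-type criterion \eqref{024} with $L=1/(\alpha+1)$, so that $p/(p-L)$ equals the constant in \eqref{8}. But beyond that the proposal is a plan rather than a proof, and the two places where the actual difficulty lives are exactly the places left open. First, nothing in your sketch produces the threshold $1.35$. A straightforward comparison of the ``second-order gain'' $\tfrac{p-1}{2(\alpha+1)^2 p r_n}$ against the deficit in $\Lambda_{n+1}/\lambda_{n+1}-\Lambda_n/\lambda_n-1/(\alpha+1)$ is essentially what one gets from the plain second-order Taylor bound $(1-\tfrac{L}{p}x)^{1-p}\geq 1+\tfrac{(p-1)L}{p}x+\tfrac12(1-\tfrac1p)L^2x^2$, and in the paper this route only yields $p\gtrsim 1.45$ (the $c_n=1$ case, giving $(1-h_2(3-\sqrt6))^{-1}\approx 1.451$). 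Getting down to $1.35$ requires extra, range-specific inputs: a sharpened third-order Taylor estimate valid for $4/3\leq p\leq 3/2$, $0\leq\alpha\leq 1/p$ (the factor $c_n=3n/(3n-1)$ of Lemma \ref{lem1}), a refined lower bound for $(1+y)^{-\alpha}$ with the $n$-dependent constant $d_n=n/(2(n+1))$ (Lemma \ref{lem2}), and then an optimization in which the auxiliary parameter $k$ is taken equal to $n$, with a monotonicity argument reducing the supremum to $n=2,3$ ($\approx 1.3497$). Your proposal contains no mechanism of this kind, so as written it would at best reproduce a bound near $1.45$, not the claimed $1.35$; asserting that $1.35$ is ``presumably the smallest value at which the estimates close'' is precisely the part that needs proof.

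Second, your argument for the clause $p>1$, $1/p\leq\alpha\leq 1$ (``the same mechanism, because the deficit vanishes at $\alpha=1$'') is not substantiated and does not match how this case is actually handled: for $p$ near $1$ the criterion \eqref{024} with $L=1/(\alpha+1)$ is not known to hold throughout $1/p\leq\alpha<1$, and the paper instead imports a different method (the auxiliary-function inequality $w_{n,p}(x)\geq 0$ from \cite{G6}, Lemma \ref{1stlemma}) to cover $1<p\leq 1/(2(1-\ln 2))\approx 1.62$, gluing it to the $p\geq 1.46$ result obtained from \eqref{024}. You would need either to prove \eqref{024} directly in that corner (with explicit uniform estimates as $\alpha\to 1$, $p\to 1^+$, for every $n$) or to supply such a second argument. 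A smaller but real technical point: your Euler--Maclaurin claim $r_n=n/(\alpha+1)+1/2+O(1/n)$ is not uniform in $\alpha$; the constant-term correction contributes a term of size $n^{-\alpha}$ to $r_n$, which for small $\alpha$ decays very slowly, so the asymptotic bookkeeping on which your reduction rests has to be replaced by exact inequalities (the paper uses $x_n\leq(\alpha+1)/(n(n+1)^{\alpha})$ from \cite{G5} and a case analysis on a quadratic in $x_n$, plus a separate elementary verification of the $n=1$ case for all $\alpha\in[0,1]$).
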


    Our proof of Theorem \ref{mainthm} involves a careful analysis of \eqref{024} as well as the approach used in \cite{G6}. Our arguments in fact show
    that inequality \eqref{8} holds for $p \geq p_0$ and $0 \leq \alpha \leq 1$ for some $p_0$ being slightly less than $1.35$, but we will be content with
    the statement given in Theorem \ref{mainthm} here.
\section{Some Lemmas}
\label{sec 3} \setcounter{equation}{0}
\begin{lemma}
 \label{lem10} Let $\alpha_0$ denote the unique root of $$v(\alpha):= 2(\alpha+1)-(1+\alpha)^2\ln 2-2^{\alpha}=0$$ on the interval $(0,1)$ and let $p_1$
 satisfies
\begin{equation*}
   \frac 1{2^{\alpha_0}} = \frac 1{\alpha_0+1}+\frac {1-1/p_1}{2(\alpha_0+1)^2}.
 \end{equation*}
  Then for $p \geq p_1$ and $0 \leq \alpha \leq 1$, we have
 \begin{align}
\label{nequal1}
    \frac 1{2^{\alpha}} \leq \frac 1{\alpha+1}+\frac {1-1/p}{2(\alpha+1)^2}.
 \end{align}
 \end{lemma}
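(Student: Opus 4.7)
My approach is to reformulate \eqref{nequal1} so that $p$ and $\alpha$ decouple. Rearranging, \eqref{nequal1} is equivalent to
$$\phi(\alpha) := \frac{2(\alpha+1)^{2}}{2^{\alpha}} - 2(\alpha+1) \le 1 - \frac{1}{p}.$$
Since the right-hand side is increasing in $p$, it suffices to prove this at $p = p_{1}$, and a direct rewriting of the defining equation of $p_{1}$ yields $1 - 1/p_{1} = \phi(\alpha_{0})$. Thus the whole statement reduces to showing that $\alpha_{0}$ is the global maximum of $\phi$ on $[0,1]$.

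To do so, I would differentiate; a short calculation gives
$$\phi'(\alpha) = 2\cdot 2^{-\alpha}\bigl(2(\alpha+1) - (\alpha+1)^{2}\ln 2 - 2^{\alpha}\bigr) = 2\cdot 2^{-\alpha}\, v(\alpha),$$
so the critical points of $\phi$ on $[0,1]$ are precisely the zeros of $v$. To see that $\alpha_{0}$ is the unique critical point and that $\phi$ attains its global maximum there, I would check that $v(0) = 1 - \ln 2 > 0$, $v(1) = 2 - 4\ln 2 < 0$, and that $v$ is strictly decreasing on $[0,1]$ (for instance via $v''(\alpha) = -2\ln 2 - (\ln 2)^{2}2^{\alpha} < 0$ combined with $v'(0) = 2 - 3\ln 2 < 0$, which forces $v' < 0$ throughout the interval).

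With this in hand, $\phi$ increases on $[0,\alpha_{0}]$ and decreases on $[\alpha_{0},1]$, so its maximum on $[0,1]$ is $\phi(\alpha_{0}) = 1 - 1/p_{1}$, and the lemma follows for $p = p_{1}$, hence for all $p \ge p_{1}$ by the monotonicity noted at the start. The only mildly delicate step is the strict monotonicity of $v$; the rest is symbolic manipulation. A conceivable alternative is to analyze $f(\alpha,p) := (\alpha+1)^{-1} + (1-p^{-1})/[2(\alpha+1)^{2}] - 2^{-\alpha}$ directly and couple the critical-point equation $f_{\alpha}=0$ with $f=0$, but this obscures the clean separation of variables that the $\phi$-reformulation provides and makes the role of the defining equation of $\alpha_{0}$ less transparent.
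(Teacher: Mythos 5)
Your proposal is correct and follows essentially the same route as the paper: after decoupling $p$, both arguments reduce the lemma to maximizing a single function of $\alpha$ (your $\phi$ is just an affine rescaling of the paper's $u(\alpha)=(\alpha+1)^2 2^{-\alpha}-\alpha$) whose derivative is a positive multiple of $v$, so the maximum occurs at $\alpha_0$ and equals $1-1/p_1$ by the defining equation of $p_1$. The only cosmetic difference is that you establish the uniqueness of the root of $v$ via $v'(0)<0$ and $v''<0$ (strict decrease), while the paper uses $v''<0$ together with the sign change $v(0)>0$, $v(1)<0$.
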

 \begin{proof}
    We write \eqref{nequal1} as
 \begin{equation*}
    u(\alpha):= \frac {(\alpha+1)^2}{2^{\alpha}}-\alpha \leq \frac {3-\frac 1p}2.
 \end{equation*}
    Note that
 \begin{equation*}
    u'(\alpha)=\frac {v(\alpha)}{2^{\alpha}}.
 \end{equation*}
    Note that $v(0)>0, v(1)<0$ and
\begin{equation*}
   v''(\alpha)=-(2+2^{\alpha}\ln 2)\ln 2<0.
\end{equation*}
   It follows that there exists a unique root $\alpha_0$ of $v(\alpha)=0$ on the interval $(0,1)$ and $u(\alpha)$ reaches its absolute maximal value
on $[0,1]$ at $\alpha_0$. The assertion of the lemma follows from this easily.
 \end{proof}

\begin{lemma}
\label{lem1}
  Let $4/3 \leq  p \leq 3/2$ and $L=1/(1+\alpha)$ with $0 \leq \alpha \leq 1/p$. We have for $\frac 1n \leq x \leq 1$,
\begin{align*}
   (1-\frac Lp x)^{1-p} \geq 1+\frac {(p-1)L}{p} x+ \frac 12(1-\frac 1p)\frac {3n}{3n-1} L^2 x^2.
\end{align*}
\end{lemma}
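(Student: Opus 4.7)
The plan is to reduce the inequality to a one-variable monotonicity statement and then verify it at the extremal boundary point by a truncated binomial expansion. Setting $y := Lx/p$, the binomial series
\[
(1-y)^{1-p} = 1 + (p-1)y + \frac{p(p-1)}{2}\, y^2\, \Psi(y),\qquad \Psi(y) := 1 + \frac{(p+1)y}{3} + \frac{(p+1)(p+2)y^2}{12} + \cdots,
\]
has only positive Taylor coefficients (since $p>1$), so $\Psi$ is strictly increasing on $[0,1)$. With this reformulation, the inequality of the lemma is equivalent to $\Psi(Lx/p) \geq \frac{3n}{3n-1}$.

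The hypothesis $\alpha \leq 1/p$ is equivalent to $L \geq p/(p+1)$. Combined with $x \geq 1/n$ and the monotonicity of $\Psi$, the problem reduces to proving
\[
\Psi\!\left(\frac{1}{n(p+1)}\right) \;\geq\; \frac{3n}{3n-1}\qquad \text{for every } n \geq 1,\; p \in [4/3,\,3/2].
\]
Retaining the expansion of $\Psi$ through order $y^2$ and substituting $y = 1/(n(p+1))$ simplifies this to the elementary condition $n(2-p) \geq p+2$, valid whenever $n \geq (p+2)/(2-p)$; at $p=3/2$ this requires $n \geq 7$, so this 2-term truncation handles all $n \geq 7$ uniformly in $p\in[4/3,3/2]$.

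The main obstacle is the finite set of remaining small values $n \in \{1,\ldots,6\}$, where the 2-term truncation fails. At the most extreme point $n=1$, $p=3/2$, $L=p/(p+1)$, the inequality is tight to within a few percent, and on the order of six Taylor terms of $\Psi$ are needed to certify it. For each small $n$ I would retain enough terms of $\Psi$ to obtain a polynomial inequality in $p$ on the narrow interval $[4/3,3/2]$; the sign on that interval can then be settled by evaluating at the endpoints $p=4/3$ and $p=3/2$, using that each truncated summand of $\Psi(1/(n(p+1)))$ depends monotonically on $p$ across this range. This delicate endpoint verification for small $n$ is what forces the restriction $p \leq 3/2$ in the statement of the lemma.
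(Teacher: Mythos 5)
Your reduction is correct and takes a genuinely different route from the paper. Writing $(1-y)^{1-p}=1+(p-1)y+\frac{p(p-1)}{2}y^2\Psi(y)$ with $y=Lx/p$, and using $L\ge p/(p+1)$, $x\ge 1/n$ together with the positivity of the coefficients of $\Psi$, does reduce the lemma exactly to $\Psi\bigl(1/(n(p+1))\bigr)\ge 3n/(3n-1)$, and your truncation through order $y^2$ gives precisely the condition $n(2-p)\ge p+2$, which settles $n\ge 7$ uniformly for $p\in[4/3,3/2]$. Your plan for $n\le 6$ also goes through: the $k$-th summand of $\Psi$ evaluated at $y=1/(n(p+1))$ equals $\frac{2}{(k+2)!\,n^k}\prod_{j=2}^{k}\frac{p+j}{p+1}$, which is non-increasing in $p$, so only $p=3/2$ needs checking, and there the truncation through $y^3$ suffices for $2\le n\le 6$ and through $y^4$ for $n=1$ (at $n=1$, $p=3/2$, $y=2/5$ the partial sum is $\approx 1.507>3/2$). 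Be aware, though, that the margins are genuinely small (about $1\%$ at $n=1$ and only a few hundredths of a percent for $n=5,6$), so this finite verification must actually be carried out term by term rather than asserted. The paper avoids the case analysis altogether: it keeps the quadratic Taylor term with coefficient $1$ and bounds the entire tail from the cubic term onward below by a geometric series, using $\frac{p+2+i}{4+i}\ge \frac56$ for $p\ge 4/3$, so that the tail carries a factor $\frac{1}{1-5Lx/(6p)}\ge \frac{3n}{3n-1}$ because $\frac{5Lx}{6p}\ge\frac{1}{3n}$; the cubic term multiplied by $\frac{3n}{3n-1}$ then already supplies the needed excess $\frac12\cdot\frac{1}{3n-1}(1-\frac1p)L^2x^2$ since $n(1+p)Lx/p\ge 1$. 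In short, the paper's argument is uniform in $n$ and purely analytic, while yours buys a cleaner scalar target $\Psi(y)\ge 3n/(3n-1)$ and a transparent explanation of where the constraint $p\le 3/2$ bites, at the cost of a delicate finite numerical check for the small $n$.
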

\begin{proof}
   We see by Taylor expansion that
\begin{align*}
   (1-\frac Lp x)^{1-p} \geq & 1+\frac {(p-1)L}{p} x+ \frac 12(1-\frac 1p)L^2 x^2 \\
&+\frac 16(1-\frac 1p)\frac {1+p}{p}L^3 x^3\sum^{\infty}_{n=0}\left (\prod^{n-1}_{i=0}\frac {p+2+i}{4+i} \right ) \left ( \frac {Lx}{p} \right )^n ,
\end{align*}
  where we define the empty product to be $1$.

  Using the estimation that for all $i \geq 0$, $p \geq 4/3$,
\begin{align*}
    \frac {6/5(p+2+i)}{4+i} \geq 1,
\end{align*}
  we see that
\begin{align*}
   (1-\frac Lp x)^{1-p} \geq & 1+\frac {(p-1)L}{p} x+ \frac 12(1-\frac 1p)L^2 x^2+\frac 16(1-\frac 1p)\frac {1+p}{p}L^3 x^3\sum^{\infty}_{n=0}\left ( \frac
   {5Lx}{6p} \right )^n \\
\geq &  1+\frac {(p-1)L}{p} x+ \frac 12(1-\frac 1p)L^2 x^2+\frac 16(1-\frac 1p)\frac {1+p}{p}L^3 x^3\frac 1{1-5Lx/(6p)} \\
\geq &  1+\frac {(p-1)L}{p} x+ \frac 12(1-\frac 1p)L^2 x^2+\frac 16 \cdot \frac {3n}{3n-1}(1-\frac 1p)\frac {1+p}{p}L^3 x^3,
\end{align*}
   since we have
\begin{align*}
    \frac {5Lx}{6p} \geq \frac {5x}{6(1+p)} \geq \frac {5/n}{6(1+3/2)}=\frac 1{3n}.
\end{align*}

   By further noting that
\begin{align*}
   \frac 16 \cdot  \frac {3n}{3n-1}(1-\frac 1p)\frac {1+p}{p}L^3 x^3 \geq \frac 12 \cdot \frac {1}{3n-1}(1-\frac 1p)L^2 x^2,
\end{align*}
   the desired result follows.
\end{proof}

\begin{lemma}
\label{lem2}
  Let $0 \leq x \leq \frac 1n$ and $0 \leq \alpha \leq 1$. We have
\begin{align*}
   (1+x)^{-\alpha} \geq 1-\alpha x+ \alpha (\alpha+1)d_nx^2, \quad d_n :=\frac {n}{2(n+1)}.
\end{align*}
\end{lemma}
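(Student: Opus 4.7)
The plan is to base the proof on Taylor's theorem with integral remainder, reducing the inequality to a one-variable integral estimate that is tight at a corner of the parameter region. Since $\tfrac{d^2}{dx^2}(1+x)^{-\alpha} = \alpha(\alpha+1)(1+x)^{-\alpha-2}$, I would write
\begin{equation*}
(1+x)^{-\alpha} = 1 - \alpha x + \alpha(\alpha+1) \int_0^x (x-s)(1+s)^{-\alpha-2}\, ds,
\end{equation*}
and then substitute $s = xu$ to obtain
\begin{equation*}
(1+x)^{-\alpha} - 1 + \alpha x = \alpha(\alpha+1)\, x^2 \int_0^1 (1-u)(1+xu)^{-\alpha-2}\, du.
\end{equation*}
In this form, the claimed inequality is equivalent to the purely analytic statement
\begin{equation*}
\int_0^1 (1-u)(1+xu)^{-\alpha-2}\, du \; \geq \; \frac{n}{2(n+1)}
\end{equation*}
for $0 \leq x \leq 1/n$ and $0 \leq \alpha \leq 1$.

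Next, I would exploit the obvious monotonicity: for each fixed $u \in [0,1]$ the integrand is nonnegative (since $1 - u \geq 0$), and because $1 + xu \geq 1$, it is a nonincreasing function of both $x \geq 0$ and $\alpha \geq 0$. Consequently the whole integral is nonincreasing in each of $x$ and $\alpha$ over the given ranges, so its minimum over $[0, 1/n] \times [0, 1]$ is attained at the corner $(x, \alpha) = (1/n, 1)$.

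Finally, I would compute the value at that corner explicitly. The substitution $w = 1 + u/n$ converts $\int_0^1 (1-u)(1+u/n)^{-3}\, du$ into a linear combination of the elementary integrals $\int w^{-3}\,dw$ and $\int w^{-2}\,dw$ on $[1,(n+1)/n]$, which after simplification collapses to exactly $n/(2(n+1)) = d_n$. Combined with the monotonicity step this yields the lemma.

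There is no real obstacle here: the only non-mechanical step is spotting the integral representation, after which everything is forced. The cleanness of the approach is explained by the fact that the inequality is sharp precisely at $(x,\alpha) = (1/n, 1)$, so a monotonicity-based reduction can be lossless and the single boundary integral must evaluate to exactly $d_n$.
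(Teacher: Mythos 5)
Your proof is correct, and it takes a genuinely different route from the paper. The paper works directly with the difference $s_{\alpha}(x)=(1+x)^{-\alpha}-1+\alpha x-\alpha(\alpha+1)d_nx^2$: it analyzes the sign change of $s''_{\alpha}$ on $[0,1/n]$ to reduce the claim to the endpoint value $s_{\alpha}(1/n)$, and then shows that $t(\alpha)=s_{\alpha}(1/n)$ is concave in $\alpha$ with $t(0)=t(1)=0$ — a two-stage derivative/concavity argument. You instead invoke Taylor's theorem with integral remainder, which (after dividing out the nonnegative factor $\alpha(\alpha+1)x^2$, with the degenerate cases $\alpha=0$ or $x=0$ being trivial equalities) reduces the lemma to
\begin{equation*}
\int_0^1 (1-u)(1+xu)^{-\alpha-2}\,du \;\geq\; \frac{n}{2(n+1)},
\end{equation*}
and since the integrand is pointwise nonincreasing in both $x\geq 0$ and $\alpha\geq 0$ (as $1+xu\geq 1$), the minimum over $[0,1/n]\times[0,1]$ sits at the corner $(1/n,1)$, where the integral evaluates exactly to $n/(2(n+1))$ — I checked the computation and it is right. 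What your approach buys is transparency: a single monotonicity observation plus one elementary integral, no second-derivative sign analysis or concavity in $\alpha$, and it makes visible that the inequality is sharp precisely at $(x,\alpha)=(1/n,1)$, which explains why the constant $d_n$ cannot be improved. The paper's approach, by contrast, is the more routine calculus-of-one-variable template that the authors also use elsewhere (e.g.\ in Lemma 2.1), so it fits the style of the rest of the paper, but your argument is shorter and lossless.
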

\begin{proof}
   We let
\begin{align*}
   s_{\alpha}(x)=(1+x)^{-\alpha}-1+\alpha x-\alpha (\alpha+1)d_nx^2.
\end{align*}
   Then
\begin{align*}
   s'_{\alpha}(x)= & -\alpha (1+x)^{-\alpha-1}+\alpha -2\alpha (\alpha+1)d_nx, \\
   s''_{\alpha}(x)= & \alpha (\alpha+1)\left ( (1+x)^{-\alpha-2}-2d_n \right ).
\end{align*}

   We note that $s''_{\alpha}(x)$ is a decreasing function of $x$ with $s''_{\alpha}(0)>0, s''_{\alpha}(1/n)<0$. It follows that $s'_{\alpha}(x)$ is first
   increasing and then decreasing
on $[0, 1/n]$. As $s'_{\alpha}(0)=0$, this implies that $s_{\alpha}(x)$ is either increasing on $[0, 1/n]$ or first increasing and then decreasing on
$[0,1/n]$. In either case,
we have that $s_{\alpha}(x) \geq \min \{ s_{\alpha}(0), s_{\alpha}(1/n) \}$ on $[0,1/n]$. Since $s_{\alpha}(0)=0$, it suffices to show $s_{\alpha}(1/n)
\geq 0$. Now, we write
\begin{align*}
   t(\alpha):=s_{\alpha}( \frac 1n)=(1+\frac 1n)^{-\alpha}-1+ \frac {\alpha}n-\frac {\alpha (\alpha+1)d_n}{n^2}.
\end{align*}
   Then we have
\begin{align*}
   t''(\alpha)=\ln^2(1+\frac 1n) \cdot (1+\frac 1n)^{-\alpha}-\frac {2d_n}{n^2}.
\end{align*}

   Note that $t''(\alpha)$ is a decreasing function of $\alpha$ such that $t''(0)=\ln^2(1+\frac 1n)-\frac {2d_n}{n^2} \leq 0$. Thus
$t(\alpha)$ is concave down on $[0, 1]$ so that $t(\alpha) \geq \min \{ t(0), t(1) \}=0$ on $[0,1]$. The assertion of the lemma now follows from this.
\end{proof}

\begin{lemma}
\label{1stlemma}
   Inequality \eqref{8} holds for $1 < p \leq \frac 1{2(1-\ln 2)}, 1/p \leq
   \alpha \leq 1$.
\end{lemma}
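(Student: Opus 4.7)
My plan is to apply the improved Cartlidge criterion \eqref{024} with $L = 1/(\alpha+1)$, for which the resulting norm bound $p/(p-L) = (\alpha+1)p/((\alpha+1)p-1)$ coincides with the sharp constant in \eqref{8}. With $\lambda_n = n^\alpha$, writing $R_n = \Lambda_n/\lambda_n$ and $x_n = 1/R_n$, the identity $R_{n+1} = 1 + R_n(1+1/n)^{-\alpha}$ converts \eqref{024} into the pointwise inequality
\begin{equation*}
\Bigl(1 - \tfrac{x_n}{(\alpha+1)p}\Bigr)^{1-p} \;\geq\; (1+1/n)^{-\alpha} + \Bigl(1 - \tfrac{1}{(\alpha+1)p}\Bigr) x_n, \qquad n \geq 1.
\end{equation*}
Comparing $\Lambda_n = \sum_{i=1}^{n} i^\alpha$ with $\int_0^n t^\alpha \, dt = n^{\alpha+1}/(\alpha+1)$ and with $n \cdot n^\alpha$ gives the basic range $1/n \leq x_n \leq (\alpha+1)/n$, and the trapezoidal refinement $\Lambda_n \leq n^{\alpha+1}/(\alpha+1) + n^\alpha/2$ (valid since $t^\alpha$ is concave for $0 \leq \alpha \leq 1$) sharpens the lower bound to $x_n \geq 2(\alpha+1)/(2n+\alpha+1)$.

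I would then bound the right-hand side by the Taylor upper bound $(1+u)^{-\alpha} \leq 1 - \alpha u + \tfrac{\alpha(\alpha+1)}{2} u^2$ (valid for $0 \leq \alpha \leq 1,\ u \geq 0$ by the sign of the third derivative, in the same spirit as Lemma \ref{lem2}), and the left-hand side by the Taylor lower bound $(1-z)^{1-p} \geq 1 + (p-1)z + \tfrac{p(p-1)}{2}z^2 + \tfrac{p(p-1)(p+1)}{6}z^3$, whose coefficients are all nonnegative since $p>1$, so that any truncation of the binomial series remains a valid lower bound. Substituting $z = x_n/((\alpha+1)p)$ and $u = 1/n$ and using the bracketing on $x_n$, the pointwise inequality reduces to a polynomial estimate in $1/n$ and $x_n$. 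For $n \geq 2$ the estimate closes directly: the Euler--Maclaurin asymptotic $x_n = (\alpha+1)/n - (\alpha+1)^2/(2n^2) + O(1/n^3)$ makes the leading two terms cancel, and the residual $O(1/n^2)$ is positive of size $(p-1)/(2pn^2)$.

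The main obstacle is the base case $n=1$, where $x_1 = 1$ and the inequality degenerates to
\begin{equation*}
\Bigl(1 - \tfrac{1}{(\alpha+1)p}\Bigr)^{1-p} \;\geq\; 2^{-\alpha} + 1 - \tfrac{1}{(\alpha+1)p}.
\end{equation*}
Here the generic quadratic Taylor bound on $(1+u)^{-\alpha}$ is too crude, and I would instead write $2^{-\alpha} = e^{-\alpha \ln 2}$ and apply $e^{-v} \leq 1 - v + v^2/2$ to get $2^{-\alpha} \leq 1 - \alpha \ln 2 + (\alpha \ln 2)^2/2$. Substituting this together with the third-order Taylor lower bound on $(1 - 1/((\alpha+1)p))^{1-p}$, the constraint reduces to a scalar relation between $p$ and $\ln 2$. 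I expect the threshold $p \leq 1/(2(1-\ln 2))$ to emerge precisely as the largest value of $p$ for which this reduced relation continues to hold in the worst case on the segment $1/p \leq \alpha \leq 1$; the hypothesis $\alpha \geq 1/p$ is essential here, as it bounds the slack $\alpha \ln 2$ uniformly away from $0$ and hence controls the quadratic correction. The finitely many intermediate $n$ are then handled by combining this refined $n=1$-style bound with the large-$n$ polynomial estimate, the constraint $\alpha \geq 1/p$ again being what closes the lower-order corrections.
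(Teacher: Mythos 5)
Your plan takes a genuinely different route from the paper: the paper does not verify \eqref{024} at all for this lemma, but instead quotes the auxiliary-sequence reduction of \cite{G6}, by which \eqref{8} (for $1/p\le\alpha\le1$) follows from $w_{n,p}(x)\ge 0$, and in turn from $w'_{n,p}(1/p)=\ln(1+1/n)-\tfrac1n+\tfrac1{pn(n+1)}\ge0$; the constant $\tfrac1{2(1-\ln2)}$ is precisely the $n=1$ instance $\ln2-1+\tfrac1{2p}\ge0$ of that criterion. Your reformulation of \eqref{024} as $(1-\tfrac{x_n}{(\alpha+1)p})^{1-p}\ge(1+1/n)^{-\alpha}+(1-\tfrac1{(\alpha+1)p})x_n$ with $x_n=\lambda_n/\Lambda_n$ is correct, as are the bracketing of $x_n$ and the individual Taylor bounds, and numerically \eqref{024} does appear to hold on the region $1<p\le\tfrac1{2(1-\ln2)}$, $1/p\le\alpha\le1$; so the route is not hopeless, but as written it is not a proof, and the threshold $\tfrac1{2(1-\ln2)}$ has no reason to ``emerge'' from your computation -- it belongs to the paper's different reduction.

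The concrete gaps are two. First, the $n=1$ case is the tight one: at $(p,\alpha)=(1,1)$ your reduced inequality is an exact equality ($2^{-1}+1-\tfrac12=1$), and along $\alpha=1/p$ the true margin is only $O(p-1)$ (about $0.001$ at $p=1.01$, $0.005$ at $p=1.05$), whereas replacing $2^{-\alpha}$ by $1-\alpha\ln2+\tfrac{(\alpha\ln2)^2}{2}$ already costs about $0.047$ at $\alpha=1$. For example at $p=1.3$, $\alpha=1$ the true margin is about $0.041$, so your combination of bounds fails there even though the inequality itself holds; it fails throughout roughly $1<p\lesssim1.4$, so the ``scalar relation in $p$ and $\ln 2$'' cannot cover the claimed range. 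Second, for $n\ge2$ the residual is not simply $\tfrac{p-1}{2pn^2}$: Euler--Maclaurin corrections of size $O(n^{-2-\alpha})$ and $O(n^{-3})$ with $p$-independent, partly negative coefficients survive the cancellation of the first two terms, and they are comparable to $\tfrac{p-1}{2pn^2}$ when $p$ is near $1$ (at $p=1.01$, $\alpha=0.99$, $n=3$ the actual margin is about half of $\tfrac{p-1}{2pn^2}$). Indeed \eqref{024} with $L=1/(1+\alpha)$ is simply false off the strip $\alpha\ge1/p$ for small $p$ (e.g.\ $p=1.1$, $\alpha=1/2$, $n=2$: the right side of \eqref{024} is about $2.3897$ while $\Lambda_3/\lambda_3\approx2.3938$), so the hypothesis $\alpha\ge1/p$ must enter quantitatively at every $n$; your sketch only asserts that it ``closes the lower-order corrections.'' Relatedly, ``the finitely many intermediate $n$'' is unsubstantiated: as $p\downarrow1$ the favorable term shrinks uniformly while the corrections do not, so the set of $n$ requiring separate treatment is not bounded independently of $p$ without an argument you have not supplied. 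To salvage this approach you would need a uniform two-parameter estimate in $(p-1)$ and $(1-\alpha)$ valid for all $n$; the paper avoids all of this by invoking the $w_{n,p}$ criterion of \cite{G6}.
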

\begin{proof}
   It is shown in \cite[p. 51]{G6} that in order for inequality \eqref{8} to be valid, it suffices to show that $w_{n,p}(x) \geq 0$ for $0 \leq x \leq 1$,
   where
\begin{equation*}
  w_{n,p}(x)=x \ln (1+1/n)-\frac 1{p}\ln(1+ \frac {x+1-1/p}{n})-(1-\frac 1{p})\ln(1+ \frac
  {x-1/p}{n}), \quad n \geq 1, p>1.
\end{equation*}
  It is also shown in \cite[p. 51]{G6} that in order for $w_{n,p}(x) \geq 0$, it suffices to show that
\begin{equation*}
   w'_{n,p}(1/p)=\ln (1+1/n)-\frac 1{n}+\frac 1{pn(n+1)} \geq 0.
\end{equation*}
  We now define
\begin{equation*}
  r_p(y)=\ln (1+y)-y+\frac {y^2}{p(1+y)}, \quad 0<y \leq \frac 12.
\end{equation*}
   Note that $w'_{n,p}(1/p)=r_p(1/n)$ and that we have
\begin{equation*}
   r'_p(y)=\frac 1{1+y}-1+\frac {2y+y^2}{p(1+y)^2}.
\end{equation*}
   We further define
\begin{equation*}
   a_p(z)=z-1+\frac 1p(1-z^2), \quad \frac 23 \leq z <1.
\end{equation*}
   It is easy to see that $a(z) \geq 0$ for any $\frac 23 \leq z <1$ when $p<5/3$. By setting $z=1/(1+y)$ implies that $r_p(y)$ is increasing on $0<y \leq
   \frac 12$. As $r_p(0)=0$, it follows that $w'_{n,p}(1/p)>0$ for $n \geq 2, 1<p<5/3$.

    When $n=1$, we have $w'_{1,p}(1/p)=\ln 2-1+\frac 1{2p}$ and  $w'_{1,p}(1/p) \geq 0$ when $p \leq \frac 1{2(1-\ln 2)} <5/3$ and this completes the
    proof.
\end{proof}
\section{Proof of Theorem \ref{mainthm}}
\label{sec 4} \setcounter{equation}{0}

 We assume that $0 \leq \alpha \leq 1$ is being fixed throughout the proof and we write $L=(1+\alpha)^{-1}$. In order for inequality \eqref{8} to valid for
 this $\alpha$, it suffices to show that inequality \eqref{024} is valid for all $n$. Note that when
$n=1$, inequality \eqref{024} is equivalent to \eqref{nequal1} and calculation shows that $p_1 \approx 1.223$ so that inequality \eqref{024} is valid when
$n=1$ for all $0 \leq \alpha \leq 1$ and $p \geq 1.23$ by Lemma \ref{lem10}.

  We assume $n \geq 2$ from now on. Suppose that for some constant $c_n \geq 1$, we have
\begin{align}
\label{cond1}
   (1-\frac Lp \cdot \frac {\lambda_n}{\Lambda_n})^{1-p} \geq 1+\frac {(p-1)L}{p} \cdot  \frac {\lambda_n}{\Lambda_n}+ \frac 12(1-\frac 1p)c_nL^2 \left (
   \frac {\lambda_n}{\Lambda_n} \right )^2.
\end{align}
   Then it is easy to see that inequality \eqref{024} follows from $f_{n,\alpha}(x_n) \geq 0$, where $x_n=(\sum^{n}_{k=1}k^{\alpha})^{-1}$ and
\begin{equation*}
  f_{n,\alpha}(x)=1+n^{\alpha}x\Big (\frac 1{\alpha+1}+\frac {n^{\alpha}}{2}\Big(1-\frac 1{p}\Big) \frac {c_n x}{(\alpha+1)^2} \Big )-\frac
  {n^{\alpha}}{(n+1)^{\alpha}}\Big ( 1+(n+1)^{\alpha}x \Big ).
\end{equation*}

    Note that for fixed $n$, $f_{n,\alpha}(x)$ is a concave up quadratic function of $x$ with $f_{n,\alpha}(0) \geq 0$ and the only root of
    $f'_{n,\alpha}(x)=0$ is $\alpha(\alpha+1)/(c_n n^{\alpha}(1-1/p))$. Moreover, we note that it follows from \cite[Lemma 6.2]{G5} that
\begin{align*}
   x_n \leq \frac {\alpha+1}{n(n+1)^{\alpha}}.
\end{align*}

  Suppose that we have
\begin{equation}
\label{7.2}
  \frac {\alpha+1}{n(n+1)^{\alpha}} \geq \frac {\alpha(\alpha+1)}{c_n n^{\alpha}(1-1/p  )}.
\end{equation}
  Then it suffices to show that for fixed $0 \leq \alpha \leq 1$ and any $n$,
\begin{equation*}
  f_{n,\alpha} \Big (\frac {\alpha(\alpha+1)}{c_nn^{\alpha}(1-1/p)} \Big )= 1-\frac {\alpha^2}{2c_n(1-1/p)}-\frac {n^{\alpha}}{(n+1)^{\alpha}} \geq  0.
\end{equation*}
  The above inequality is already proven to be valid on \cite[p. 846-847]{G5}, since $c_n \geq 1$ here. We are thus led to the consideration to the other
  case, when inequality \eqref{7.2} reverses and we then deduce that when $n \geq 2$ and $0 \leq \alpha \leq 1$,
\begin{equation*}
  f_{n,\alpha}(x_n) \geq f_{n,\alpha} \Big (\frac {\alpha+1}{n(n+1)^{\alpha}} \Big )=\frac {n^{\alpha}}{(n+1)^{\alpha}}g_{n,\alpha}(\frac 1{n}),
\end{equation*}
   where
\begin{equation*}
   g_{n,\alpha}(y)=(1+y)^{\alpha}+y\Big (1+ \frac {1-1/p}{2}c_n y(1+y)^{-\alpha} \Big )-1-(\alpha+1)y.
\end{equation*}

   We now let $k$ be a positive integer and note that by Taylor expansion and Lemma \ref{lem2}, we have for $0 \leq \alpha  \leq 1$, $0<y \leq 1/k$,
\begin{equation*}
   (1+y)^{\alpha} \geq 1+\alpha y+\frac {\alpha (\alpha-1)}{2}y^2, \hspace{0.1in} (1+y)^{-\alpha} \geq  1-\alpha y+ \frac {\alpha (\alpha+1)k}{2(k+1)}y^2.
\end{equation*}

   We also note that the function
\begin{equation*}
   y \mapsto  1-\alpha y+ \frac {\alpha (\alpha+1)k}{2(k+1)}y^2, \quad 0 \leq y \leq \frac 1k
\end{equation*}
    is minimized at $y=\frac 1k$.

   We conclude from the above discussions that when $0 \leq y \leq 1/k$, $0 \leq \alpha \leq 1$,
\begin{align*}
   g_{n,\alpha}(y) \geq & 1+\alpha y+\frac {\alpha (\alpha-1)}{2}y^2+y\Big (1+ \frac {1-1/p}{2}c_ny(1-\alpha y+\frac {\alpha (\alpha+1)k}{2(k+1)}y^2) \Big
   )-1-(\alpha+1)y \\
   =& \frac {y^2}{2}\Big (\alpha (\alpha-1)+(1-1/p)c_n(1-\alpha y+\frac {\alpha (\alpha+1)k}{2(k+1)}y^2) \Big ) \\
\geq & \frac {y^2}{2}\Big (\alpha (\alpha-1)+(1-1/p)c_n\frac {\alpha^2-(2k+1)\alpha+2k(k+1)}{2k(k+1)} \Big ) .
\end{align*}

    It follows that in order for $g_{n,\alpha}(y) \geq 0$, it suffices to have
\begin{align}
\label{pbound}
   p\geq (1-\frac {h_k(\alpha)}{c_n})^{-1},
\end{align}
   where
\begin{eqnarray*}
   h_k(\alpha)=\frac {2k(k+1)\alpha(1-\alpha)}{\alpha^2-(2k+1)\alpha+2k(k+1)}.
\end{eqnarray*}

   We note that
\begin{eqnarray*}
   h'_k(\alpha)=\frac {4k^2(k+1)(\alpha^2-2(k+1)\alpha+k+1)}{(\alpha^2-(2k+1)\alpha+2k(k+1))^2}.
\end{eqnarray*}

   We then deduce that $h'_k(\alpha)$ has a unique root $\alpha=k+1-\sqrt{k(k+1)}$ in $(0,1)$. As $h_k(0)=h_k(1)=0$, it follows that $h_k(\alpha)$ takes
   its maximal value at $\alpha=k+1-\sqrt{k(k+1)}$. In which case we have
\begin{align}
\label{hmin}
   h_k(k+1-\sqrt{k(k+1)})=\frac {2k(k+1)(\sqrt{k(k+1)}-k)}{2k^2+k+(2k-1)\sqrt{k(k+1)}}.
\end{align}

   We now note that inequality \eqref{cond1} is valid for $c_n=1$ by Taylor expansion. Setting $c_n=1$ in \eqref{pbound} and applying \eqref{hmin} by
   taking $k=2$, we see that inequality \eqref{024} is valid for $n \geq 2$ and $0 \leq \alpha \leq 1$ when
\begin{align*}
   p \geq (1- h_2(3-\sqrt{6}))^{-1} \approx 1.451.
\end{align*}

   Combining this bound of $p$ and the bound that $p \geq 1.23$ by Lemma \ref{lem10} for the case $n=1$, we see that inequality \eqref{8} is valid
for all $p \geq 1.46$ and $0 \leq \alpha \leq 1$. We then deduce from this and Lemma \ref{1stlemma} by noting that $1/(2(1-\ln2)) \approx 1.62$ that
inequality \eqref{8} holds for $p >1$ and $1/p \leq \alpha \leq 1$. This completes the proof for the second assertion of Theorem \ref{mainthm}.

   For the first assertion of Theorem \ref{mainthm}, we note by our discussions above that it suffices to further assume that $4/3 \leq p \leq 3/2$ and $0
   \leq \alpha \leq 1/p$.  In which case both Lemma \ref{lem1} and Lemma \ref{lem2} are valid for $x=1/n$. Thus setting $c_n=3n/(3n-1), k=n$ in
   \eqref{pbound}, we see that \eqref{8} is valid for any $p$ satisfying
\begin{align}
\label{pboundgen}
   p \geq \max_{n \geq 2} \big \{ \big(1-B\big((1+\frac 1n)^{1/2} \big ) \big )^{-1} \big \}
\end{align}
  where
\begin{align*}
   B(x)= \frac 23+\frac 2{3} \cdot \frac {1}{(x^4-4x^2)/(1+4x)-1}.
\end{align*}
  Note that
\begin{align*}
    \left(\frac {x^4-4x^2}{1+4x} \right )'=
\frac {4x(x(3x^2-4)+x^2-2)}{(1+4x)^2}.
\end{align*}

    It follows that the right-hand side expression above is $<0$ when $x^2 \leq 4/3$, which implies that $B(x)$ is an increasing function of $x$ when $x
    \leq \sqrt{4/3}$. We then deduce from \eqref{pboundgen} that
\begin{align*}
   p \geq \max_{n=2,3} \big \{ \big(1-B\big((1+\frac 1n)^{1/2} \big ) \big )^{-1} \big \} \approx 1.3497.
\end{align*}

   The first assertion of Theorem \ref{mainthm} now follows from the above estimation and this completes the proof of Theorem \ref{mainthm}.


\vspace*{.5cm}
\noindent\begin{tabular}{p{8cm}p{8cm}}
School of Mathematical Sciences & Academy of Mathematics and Systems Science \\
Beihang University &  Chinese Academy of Sciences\\
Beijing 100191, China & Beijing 100190, China \\
Email: {\tt penggao@buaa.edu.cn} & Email: {\tt zhaohuayu17@mails.ucas.ac.cn} \\
\end{tabular}

\end{document}